\newcommand{\set}[2]{\left\{#1\,\middle\vert\,#2\right\}}
\newcommand{\card}[1]{\##1}
\newcommand{\abs}[1]{\left|#1\right|}
\newcommand{\ord}{\phi}
\newcommand{\ords}{\ord_{\text{\upshape st}}}
\newcommand{\ordh}{\ord_{\text{\upshape hom}}}
\newcommand{\spec}{f}
\newcommand{\specp}{\spec_{\text{\upshape pr}}}
\newcommand{\speco}{\spec_{\text{\upshape ord}}}
\newcommand{\specC}{\spec_{\text{\upshape Cay}}}
\newcommand{\C}{\mathbb{C}}
\newcommand{\Z}{\mathbb{Z}}
\newcommand{\1}{\bm{1}}
\newcommand{\adet}[1][\alpha]{\operatorname{det}_{#1}}
\DeclareMathOperator{\wrdet}{wrdet}
\DeclareMathOperator{\diag}{diag}
\DeclareMathOperator{\sgn}{sgn}
\DeclareMathOperator{\per}{per}
\DeclareMathOperator{\AT}{AT}
\newcommand{\floor}[1]{\left\lfloor#1\right\rfloor}
\newcommand{\gen}[1]{\left\langle#1\right\rangle}
\newcommand{\sym}[1]{\mathfrak{S}_{#1}}
\newcommand{\ind}[2]{[#1:#2]}
\newcommand{\cy}[1]{\Z_{#1}} % cyclic group
\newcommand{\wD}{\Theta}
\newcommand{\wM}{X}
\newcommand{\e}{\varepsilon}
\newtheorem{thm}{Theorem}
\newtheorem{lem}[thm]{Lemma}
\newtheorem{prop}[thm]{Proposition}
\theoremstyle{definition}
\newtheorem{ex}{Example}
\theoremstyle{remark}
\newtheorem{rem}{Remark}
\newcommand{\MSC}[2]{\smallskip\noindent
{\bfseries 2010 Mathematics Subject Classification:} {\itshape Primary} #1, {\itshape Secondary} #2.}
\newcommand{\Keywords}[1]{\smallskip\noindent
{\bfseries Keywords and phrases:} #1.}
\newcommand{\Grand}[2]{\thanks{Partially supported by Grand-in-Aid for  #1 No. #2 of JSPS.}}
\title{\sffamily Wreath determinants for group-subgroup pairs}
\author{Kei HAMAMOTO, Kazufumi KIMOTO\Grand{Scientific Research (C)}{25400044},\\
Kazutoshi TACHIBANA and Masato WAKAYAMA\Grand{Challenging Exploratory Research}{25610006}}
\date{June 10, 2014}
\begin{document}
\maketitle

\begin{abstract}
The aim of the present paper is to generalize the notion of the group determinants for finite groups.
For a finite group $G$ and its subgroup $H$, one may define a rectangular matrix
of size $|H|\times |G|$ by
$X=\bigl(x_{hg^{-1}}\bigr)_{{h\in H,g\in G}}$,
where $\set{x_g}{g\in G}$ are indeterminates indexed by the elements in $G$.
Then, we define an invariant $\Theta(G,H)$ for a given pair $(G,H)$
by the $k$-wreath determinant of the matrix $X$,  where $k$ is the index of $H$ in $G$.
The $k$-wreath determinant of $n$ by $kn$ matrix is a relative invariant of the left action by the general linear group of order $k$ and right action by
the wreath product of two symmetric groups of order $k$ and $n$.
Since the definition of $\Theta(G,H)$ is \emph{ordering-sensitive}, representation theory of symmetric groups are
naturally involved. In this paper, we treat abelian groups with a special choice of indeterminates  and give various
examples of non-abelian group-subgroup pairs.% including group-subgroup pair.

\MSC{20C15}{20C30, 20E22, 05E15}

\Keywords{group determinants, wreath determinants, finite groups, symmetric groups, characters, Cayley graphs}
\end{abstract}

\section{Introduction}

It is Frobenius who initiated the character theory of finite groups \cite{C1999}.
At the very first stage of his study,
the \emph{group determinant} $\Theta(G)$ of a given finite group $G$ defined as the determinant of the
group matrix $\bigl(x_{uv^{-1}}\bigr)_{u,v\in G}$
\begin{equation}
\Theta(G):=\det\bigl(x_{uv^{-1}}\bigr)_{u,v\in G},
\end{equation}
where $\set{x_g}{g\in G}$ are indeterminates indexed by the elements in $G$,
%which is an invariant of $G$ defined from the `multiplication table' of $G$,
played an important role.
(One should note that the definition of $\Theta(G)$ is independent of the choice of the ordering of elements in $G$.)
Indeed, the group determinant $\Theta(G)$ reflects the structure of the regular representation of $G$,
which contains all the irreducible representation of $G$.
The factorization of $\Theta(G)$ is corresponding to the
irreducible decomposition of the regular representation,
and the irreducible character values appear as coefficients in the factors.
In 1991, Formanek and Sibley \cite{FS1991PAMS} shows that
two groups are isomorphic if and only if their group determinants coincide
under a suitable correspondence between the sets of indeterminates for these groups:
\begin{equation}
\Theta(G)=\Theta(G') \iff G\cong G'.
\end{equation}
Namely, the group determinant is a perfect invariant for finite groups.
%The result is quite natural and seems classical, but comparatively recent.
%Thus, there would be interesting problems concerning group determinants which have been kept untouched.

In this paper,  we extend the notion of group determinants. Actually, we
define an invariant $\Theta(G,H)$ for a given pair $(G,H)$,
$G$ being a finite group and $H$  its subgroup, by employing the \emph{wreath determinant} \cite{KW2008}.
For a positive integer $k$, the 
\emph{$k$-wreath determinant} $\wrdet_k$  is a polynomial function
on the set of $n$ by $kn$ matrices for each positive integer $n$
characterized by
(i) multilinearity in column vectors,
(ii) relative $GL_n$-invariance from the left,
and
(iii) $\sym k^n$-invariance with respect to permutations in columns
(see Section \ref{sec:adet_and_wrdet} for the precise definition).
Roughly, $\Theta(G,H)$ is defined to be
$$
\Theta(G,H):=\wrdet_k\bigl(x_{hg^{-1}}\bigr)_{\substack{h\in H\\ g\in G}},
$$
where $k=\ind GH$ is the index of $H$ in $G$.
In fact, since $\wrdet_k$ is \emph{not} relative invariant under general permutations in columns (i.e. the action
of $\sym{kn}$ from the right),
we should take account of the \emph{ordering} of $G$ to define $\Theta(G,H)$.
This is a crucial difference from $\Theta(G)$.
We note that $\Theta(G,G)$ is nothing but the original group determinant $\Theta(G)$ since
the $1$-wreath determinant is the ordinary determinant.

It would be fundamental and natural to explore an analog of the Frobenius character theory
as well as e.g. Formanek-Sibley type theorems for $\Theta(G,H)$. There are, however, certain obstacles in the study.
One of the most essential one is the fact that
the definition of $\Theta(G,H)$ is \emph{ordering-sensitive};
If we exchange the ordering of the columns in the matrix $\bigl(x_{hg^{-1}}\bigr)_{{h\in H,g\in G}}$ (sometimes called
as a group-subgroup matrix),
then its wreath determinant becomes rather different from the one before manipulated.
Actually, one needs to taking account of representations of symmetric groups of order $kn$ and $k$.
Therefore, as a small first step,
we analyze $\Theta(G,H)$ when $G$ is a finite \emph{abelian} group
under a certain \emph{specialization of indeterminates},
in which case the difficulties mentioned above are fairly reduced.
We give a factorization of $\Theta(G,H)$ when $H$ is a direct product of several components in $G=\Z/m_1\Z\times \cdots\times \Z/m_l\Z$
and the indeterminates are specialized to powers of another indeterminate $q$
according to a suitably chosen ordering of elements in $G$. 
Imitating the result and proof for finite abelian groups, we give also certain  
computations for non-abelian group-subgroup pairs under some particular condition. 
In the last section, 
we will give several examples for non-abelian groups as well as another specializations
of indeterminates. The examples include what we call a Cayley specialization,
which is intimately related to the graph theory \cite{Cid}.

%that is, clarify what the invariant $\Theta(G,H)$ determines for the pair $(G,H)$:
%For instance, determine whether the coincidence of $\Theta(G,H)$ and $\Theta(G',H')$
%implies the isomorphism $G\cong G'$ and $H\cong H'$ or not under a suitable constraint.

%This theory, however, does not remain in the mainstream of the representation theory
%and it hardly attract our attention at present.
%This could be because the theory is quite involving and the development of
%the representation theory can be achieved much more simply
%in a framework of modules.

%We introduce and study a generalization of group determinants.
%More precisely, we define a certain invariant $\Theta(G,H)$ for a given finite group-subgroup pair $(G,H)$.
%For this purpose,
%we adobt a polynomial function called the \emph{wreath determinant},
%which is defined for rectangular matrices of particular sizes.
%
%A fundamental problem would be a study of Formanek-Sibley type theorems,
%that is, clarify what the invariant $\Theta(G,H)$ determines for the pair $(G,H)$:
%For instance, determine whether the coincidence of $\Theta(G,H)$ and $\Theta(G',H')$
%implies the isomorphism $G\cong G'$ and $H\cong H'$ or not under a suitable constraint.
%
%
%As a beginning, we calculate $\Theta(G,H)$ when $G$ is a finite abelian group
%under a certain specialization of variables.

\section{Wreath determinants for group-subgroup pairs}

\subsection{Alpha-determinants and wreath determinants}\label{sec:adet_and_wrdet}

Let $\alpha$ be a complex parameter.
The \emph{alpha-determinant} of a square matrix $X=(x_{ij})\in M_n$ is defined by
\begin{equation}
\adet X=\sum_{\sigma\in\sym n}\alpha^{\nu(\sigma)}x_{\sigma(1)1}\dots x_{\sigma(n)n},
\end{equation}
where $\nu(\sigma)=\sum_{j\ge2}(j-1)c_j(\sigma)$, $c_j(\sigma)$ being the number of
$j$-cycles in $\sigma$. (The notion of alpha determinants was first introduced by Vere-Jones  \cite{VJ1988}
as  ``the $\alpha$-permanent''.
By \cite{ST2003}, it was renamed as the alpha determinant.)

For an $n$ by $kn$ matrix $X=(x_{ij})\in M_{n,kn}$,
the \emph{$k$-wreath determinant} of $X$ is defined by
\begin{equation}
\wrdet_k X=\adet[-1/k](X\otimes\1_{k,1}),
\end{equation}
where $A\otimes B$ denotes the Kronecker product of $A$ and $B$,
$\1_{k,1}$ is the $k$ by $1$ all-one matrix \cite{KW2008}. We note here
that if we look at the irreducible decomposition of the
$GL_n$-cyclic module generated by $\adet X$, the distinguished 
phenomena happens when $\alpha=-1/k$ $(k=1,2,...,n-1)$ so that
$\adet[-1/k]$ shares some basic property of determinants \cite{MW2006}. This can be seen from the fact that each number $\pm1/k$ is a root of content polynomials \cite{Mac}. Remark that $\adet[-1]=\det$ and $\adet[1]=\per$, the permanent.

\begin{ex}
The $2$-wreath determinant of
\begin{equation*}
A=\begin{pmatrix}
a_1 & a_2 & a_3 & a_4 \\
b_1 & b_2 & b_3 & b_4
\end{pmatrix}
\end{equation*}
is given by
\begin{align*}
\wrdet_2A&=
\adet[-1/2]\begin{pmatrix}
a_1 & a_2 & a_3 & a_4 \\
a_1 & a_2 & a_3 & a_4 \\
b_1 & b_2 & b_3 & b_4 \\
b_1 & b_2 & b_3 & b_4
\end{pmatrix} \\
&=\frac14(a_1a_2b_3b_4+b_1b_2a_3a_4)
-\frac18(
a_1b_2a_3b_4+a_1b_2b_3a_4+b_1a_2a_3b_4+b_1a_2b_3a_4)
\\
&=\frac18\begin{vmatrix}
a_1 & a_3 \\ b_1 & b_3
\end{vmatrix}\begin{vmatrix}
a_2 & a_4 \\ b_2 & b_4
\end{vmatrix}
+\frac18\begin{vmatrix}
a_1 & a_4 \\ b_1 & b_4
\end{vmatrix}\begin{vmatrix}
a_2 & a_3 \\ b_2 & b_3
\end{vmatrix}.
\end{align*}
\end{ex}

The following result is fundamental (see \cite{KW2008} for the proof and \cite{K2014}).
\begin{prop}
Let $k,n$ be positive integers.
Put $f(X)=\wrdet_k X$ for $X\in M_{n,kn}$.
Then $f$ is a map from $M_{n,kn}$ to $\C$ satisfying the following conditions:
\begin{enumerate}
\item
$f$ is multilinear with respect to columns.
\item
$f(gX)=(\det g)^kf(X)$ for any $g\in GL_n$.
\item
$f(XP(\tau))=f(X)$ for any $\tau\in\sym k^n$,
where $P(\tau)=(\delta_{i\tau(j)})$ is the permutation matrix for $\tau$.
\end{enumerate}
Conversely, if a map $f\colon M_{n,kn}\to\C$ satisfies the three conditions above,
then $f$ equals the $k$-wreath determinant up to constant multiple.
\qed
\end{prop}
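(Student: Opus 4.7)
The plan is to establish the two directions separately.

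For the forward implication (that $\wrdet_k$ satisfies (1)--(3)), condition~(1) is immediate: column $j$ of $X$ corresponds to the single $j$-th column of $X\otimes\1_{k,1}$ (entries merely duplicated $k$ times vertically), so multilinearity in columns descends from multilinearity of $\adet[-1/k]$. For~(2), combine the Kronecker identity $(gX)\otimes\1_{k,1}=(g\otimes I_k)(X\otimes\1_{k,1})$ with two cases. For $g=\diag(t_1,\dots,t_n)$, the $n$ row-blocks of $X\otimes\1_{k,1}$ are scaled by $t_1,\dots,t_n$ respectively, so row-multilinearity of $\adet[-1/k]$ produces a factor $\prod_{i}t_{i}^{k}=(\det g)^{k}$. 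For an elementary unipotent $e_{ij}(c)=I+cE_{ij}$ with $\det=1$, expand $\adet[-1/k]((e_{ij}(c)X)\otimes\1_{k,1})$ by row-multilinearity in the $k$ rows of block $i$ as a sum over subsets $S\subset[k]$; when $S\neq\varnothing$ the resulting matrix has at least $k+1$ rows equal to row $j$ of $X$ and therefore lies in the null ideal of $\adet[-1/k]$ (equivalently, $\alpha=-1/k$ is a root of the content polynomial of any Young shape harbouring $k+1$ boxes in a single row), so only $S=\varnothing$ survives. Since diagonal and elementary unipotent matrices generate $GL_n$, condition~(2) follows. Condition~(3) is the subtle point: regrouping each $\sigma\in\sym{kn}$ by the map $\phi\colon[kn]\to[n]$ sending $j$ to the row-block containing $\sigma(j)$ collapses $\adet[-1/k](X\otimes\1_{k,1})$ into a sum over such $\phi$ with all fibres of size $k$, and the coefficient attached to each $\phi$ turns out, after a cycle-statistic computation specific to $\alpha=-1/k$, to be $\sym k$-invariant under permutations of the columns within each of the $n$ blocks.

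For uniqueness, suppose $f$ satisfies (1)--(3) and set $V=\C^{n}$. By~(1), $f$ is multilinear in the $kn$ columns, so $f\in(V^{*})^{\otimes kn}$ as a rational $GL_n$-module. Condition~(3) forces $f\in(\mathrm{Sym}^{k}V^{*})^{\otimes n}$, and~(2) places $f$ in the $(\det)^{k}$-isotypic component of this $GL_n$-module. By character theory this dimension equals the multiplicity of the Schur functor $S^{(k^{n})}V$ inside $(S^{(k)}V)^{\otimes n}=(\mathrm{Sym}^{k}V)^{\otimes n}$, i.e.\ the Kostka number $K_{(k^{n}),(k^{n})}=1$: the unique semistandard tableau of shape $(k^{n})$ and content $(k^{n})$ is the one whose $i$-th row contains only $i$'s. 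Equivalently, iterated Pieri shows that the only chain $\varnothing=\lambda_{0}\subset\lambda_{1}\subset\cdots\subset\lambda_{n}=(k^{n})$ with each $\lambda_{i}/\lambda_{i-1}$ a horizontal $k$-strip contained in $(k^{n})$ is $\lambda_{i}=(k^{i})$. Hence the space of admissible $f$ is one-dimensional, and $f$ must be a scalar multiple of $\wrdet_{k}$.

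The main obstacle is condition~(3) in the forward direction: $\adet$ is generally not invariant under column permutations, and the $\sym k^{n}$-invariance of the wreath determinant arises only through a delicate interaction between the choice $\alpha=-1/k$ and the $k$-fold row-repetition structure of $X\otimes\1_{k,1}$. By contrast, (1) and~(2) reduce to elementary manipulations (modulo the null-ideal fact for $\adet[-1/k]$), and once (1)--(3) are translated into highest-weight constraints, the converse is a compact representation-theoretic count.
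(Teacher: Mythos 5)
Your converse (uniqueness) direction is fine: (1) makes $f$ a multilinear form on the $kn$ columns, (3) pushes it into $(\mathrm{Sym}^k(\C^n)^*)^{\otimes n}$, (2) makes it a $\det^{\pm k}$-semi-invariant, and the multiplicity count via the Kostka number $K_{(k^n),(k^n)}=1$ is correct; since the paper itself only refers to \cite{KW2008} for a proof, this clean Pieri/Kostka count is a perfectly good route. Conditions (1) and (2) are also handled correctly \emph{granted} the vanishing lemma that $\adet[-1/k]$ annihilates matrices having $k+1$ identical rows; be aware that your parenthetical content-polynomial remark is not by itself a proof of that lemma (one also needs that the immanants attached to shapes with first row of length at most $k$ vanish on such matrices, or some substitute), so either quote it from \cite{KW2008} or prove both halves.

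The genuine gap is condition (3). Your ``regroup by $\phi$'' step only restates the claim: the invariance of the coefficient $c_\phi=\sum_{\sigma\,:\,\lceil\sigma(j)/k\rceil=\phi(j)\ \forall j}(-1/k)^{\nu(\sigma)}$ under precomposing $\phi$ with permutations inside the column blocks \emph{is} statement (3), and the promised ``cycle-statistic computation specific to $\alpha=-1/k$'' is never carried out. Moreover the difficulty is misdiagnosed: (3) does not depend on the value $\alpha=-1/k$ at all, and it has an easy proof valid for every $\alpha$. For $\tau\in\sym k^n$ (acting within the consecutive blocks of size $k$) one has $P(\tau)\,(X\otimes\1_{k,1})=X\otimes\1_{k,1}$, because the $k$ rows inside each row block of $X\otimes\1_{k,1}$ coincide and $\tau$ only permutes indices within such blocks; and $\adet[\alpha]$ is invariant under simultaneous conjugation $Y\mapsto P(\tau)^{-1}YP(\tau)$ since $\nu$ is a class function. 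Hence
\begin{equation*}
\adet[\alpha]\bigl((X\otimes\1_{k,1})P(\tau)\bigr)
=\adet[\alpha]\bigl(P(\tau)^{-1}(X\otimes\1_{k,1})P(\tau)\bigr)
=\adet[\alpha](X\otimes\1_{k,1}),
\end{equation*}
which is exactly (3); equivalently, on your coefficients, $\sigma\mapsto\tau^{-1}\sigma\tau$ is a $\nu$-preserving bijection between the permutations compatible with $\phi$ and those compatible with $\phi\circ\tau$. Replace the vague assertion by this argument; the only place where $\alpha=-1/k$ is genuinely needed is condition (2), via the $k+1$-equal-rows lemma you already isolate.
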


\begin{rem}
The right $\sym k^n$-invariance of the $k$-wreath determinant above
extends to the relative right $\sym k\wr\sym n$-invariance
\begin{equation*}
\wrdet_k XP(g)=(\sgn\sigma)^k\wrdet_k X,\qquad
g=(\tau,\sigma)\in\sym k\wr\sym n,
\end{equation*}
where $\sym k\wr\sym n=\sym k^n\rtimes\sym n$ is the wreath product of $\sym k$ and $\sym n$,
which we regard as a subgroup of $\sym{kn}$.
\end{rem}

\subsection{Wreath determinants associated with a pair $(G,H)$}

Let $G$ be a finite group of order $m=kn$, and $H$ be a subgroup of $G$ of order $n$.
Suppose that a bijection $\ord\colon\{0,1,\dots,m-1\}\to G$ called an \emph{ordering of $G$} is given.
We put $g_i:=\ord(i)$ for short.

Let $R$ be a commutative ring, and $\spec\colon G\to R$ be a map called a \emph{specialization}.
We sometimes write $\spec(g)=x_g$ or $\spec(\ord(i))=\spec(g_i)=x_i$ for short.
Define
\begin{equation}
\wD(G,H,\ord,\spec):=\wrdet_k \wM(G,H,\ord,\spec),\qquad
\wM(G,H,\ord,\spec):=\Bigl(\spec(h_ig_j^{-1})\Bigr)_{\substack{0\le i< n\\ 0\le j< m}},
\end{equation}
where $\wrdet_k$ denotes the $k$-wreath determinant
and $H=\{h_0,\dots,h_{n-1}\}$.
If the ordering $\ord$ and the specialization $\spec$ are clear in the context,
then we omit them and write simply $\wD(G,H)$.

\begin{ex}
$\wD(G,\spec):=\wD(G,G,\ord,\spec)=\det(\spec(g_ig_j^{-1}))$ is the ordinary group determinant.
In this case, the ordering $\ord$ is irrelevant.
\end{ex}

\begin{ex}
We have $\wD(G,\{e\},\ord,\spec)=\frac{k!}{k^k}\prod_{g\in G}\spec(g)$.
If $\spec(g)=x_{o(g)}$ for $g\in G$, where $o(g)$ is the order of $g$, then
$\wD(G,\{e\},\ord,\spec)=\frac{k!}{k^k}\prod_{i\ge1}x_i^{\card{\set{g\in G}{o(g)=i}}}$
tells us the distribution of orders of elements in $G$.
\end{ex}

\begin{ex}
Let $G=\{g_0=e,g_1=a,g_2=a^2,g_3=a^3\}$ be the cyclic group of order $4$ with the `standard' ordering,
and take $H=\{h_0=g_0=e,h_1=g_2=a^2\}$. We have
\begin{equation*}
\wM(G,H)
=\begin{pmatrix}
\spec(h_0g_0^{-1}) & \spec(h_0g_1^{-1}) & \spec(h_0g_2^{-1}) & \spec(h_0g_3^{-1}) \\
\spec(h_1g_0^{-1}) & \spec(h_1g_1^{-1}) & \spec(h_1g_2^{-1}) & \spec(h_1g_3^{-1})
\end{pmatrix}
=\begin{pmatrix}
\spec(e) & \spec(a^3) & \spec(a^2) & \spec(a) \\
\spec(a^2) & \spec(a) & \spec(e) & \spec(a^3)
\end{pmatrix}
\end{equation*}
and
\begin{gather*}
\wD(G,H)=\frac18\begin{vmatrix}
\spec(e) & \spec(a^2) \\
\spec(a^2) & \spec(e)
\end{vmatrix}
\begin{vmatrix}
\spec(a^3) & \spec(a) \\
\spec(a) & \spec(a^3)
\end{vmatrix}+\frac18\begin{vmatrix}
\spec(e) & \spec(a) \\
\spec(a^2) & \spec(a^3)
\end{vmatrix}
\begin{vmatrix}
\spec(a^3) & \spec(a^2) \\
\spec(a) & \spec(e)
\end{vmatrix}\\
=\frac18(\spec(e)^2-\spec(a^2)^2)(\spec(a^3)^2-\spec(a)^2)+\frac18(\spec(e)\spec(a^3)-\spec(a)\spec(a^2))^2.
\end{gather*}
If we assume that $\spec(g_i)=\spec(a^{i})=q^{i}\in\C[q]$, then we have
\begin{equation*}
\wD(G,H)=-\frac18q^2(1-q^4)^2.
\end{equation*}
\end{ex}

\begin{ex}\label{ex:Klein 4-group}
Let $G=\{g_0=e, g_1=a, g_2=b, g_3=ab\}$ be the Klein four-group
(i.e. $a^2=b^2=e$, $ab=ba$),
and take subgroups $H=\{e,a\}$, $H'=\{e,b\}$ and $H''=\{e,ab\}$ of order 2.
We have
\begin{align*}
\wD(G,H)
&=\wrdet_2\begin{pmatrix}
\spec(e) & \spec(a) & \spec(b) & \spec(ab) \\
\spec(a) & \spec(e) & \spec(ab) & \spec(b)
\end{pmatrix}\\
&=-\frac18(\spec(e)\spec(ab)-\spec(a)\spec(b))^2
-\frac18(\spec(e)\spec(b)-\spec(a)\spec(ab))^2,
\end{align*}
\begin{align*}
\wD(G,H')
&=\wrdet_2\begin{pmatrix}
\spec(e) & \spec(a) & \spec(b) & \spec(ab) \\
\spec(b) & \spec(ab) & \spec(e) & \spec(a)
\end{pmatrix}\\
&=\frac18(\spec(e)^2-\spec(b)^2)(\spec(a)^2-\spec(ab)^2)
+\frac18(\spec(e)\spec(a)-\spec(b)\spec(ab))^2,
\end{align*}
\begin{align*}
\wD(G,H'')
&=\wrdet_2\begin{pmatrix}
\spec(e) & \spec(a) & \spec(b) & \spec(ab) \\
\spec(ab) & \spec(b) & \spec(a) & \spec(e)
\end{pmatrix}\\
&=\frac18(\spec(e)\spec(a)-\spec(b)\spec(ab))^2
+\frac18(\spec(e)^2-\spec(ab)^2)(\spec(a)^2-\spec(b)^2).
\end{align*}
If we assume that $\spec(g_i)=q^{i}\in\C[q]$, then we have
\begin{equation*}
\wD(G,H)=-\frac18q^4(1-q^2)^2,\qquad
\wD(G,H')=\frac14q^2(1-q^4)^2,\qquad
\wD(G,H'')=\frac18q^2(1-q^2)^2(2+3q^2+2q^4),
\end{equation*}
which are different from each other.
Thus $H\cong H'$ does not imply $\wD(G,H)=\wD(G,H')$ in general.
\end{ex}

\begin{rem}
Denote by $[H]$ the set of all isomorphism classes of $H$ in $G$. Then, as the example above shows,
the map $[H] \to \wD(G,H)$
is a multivalued function. The precise/deep understanding of this fact would be important.
For instance, does the collection $\{\wD(G,H)\}_{[H]}$ determine an isomorphism class of the
pair $(G,H)$?
\end{rem}

%%%%%%%%%%%%%%%%%%%%%%%%%%%%%%%%%%%%%%%%
\section{Finite abelian group-subgroup pair case}
%%%%%%%%%%%%%%%%%%%%%%%%%%%%%%%%%%%%%%%%
\subsection{Standard ordering and principal specialization}

Let $\cy{r}=\Z/r\Z=\{0,1,2,\dots,r-1\}$ be the cyclic group of order $r$,
where we write $j$ to indicate $j+r\Z$ for simplicity.

Assume that
$G=\cy{m_1}\times\dots\times\cy{m_l}$.
Put
\begin{equation}
M_j=\prod_{i=1}^{j-1}m_i\qquad(j=1,2,\dots,l),\qquad
m=m_1m_2\dots m_l=\card G,
\end{equation}
and fix the ordering $\ords$ by
\begin{equation}
g_{i}=\ords(i)
=\Bigl(\floor{i/{M_1}}\bmod m_1,\dots,\floor{i/{M_l}}\bmod m_l\Bigr)
\qquad(i=0,1,\dots,m-1),
\end{equation}
where $\floor x$ denotes the largest integer which is not greater than $x$
and $a\bmod b$ denotes the remainder of $a$ %when
divided by $b$. We call  $\ords$ the \emph{standard ordering}.
We also take a specialization $\specp\colon G\to \C[q]$ as $\specp(g_i)=q^{i}$,
which we call the \emph{principal specialization}.

\begin{ex}
When $G=\cy3\times\cy2\times\cy2$, we have
\begin{gather*}
g_0=(0,0,0),\quad g_1=(1,0,0),\quad g_2=(2,0,0),\quad
g_3=(0,1,0),\quad g_4=(1,1,0),\quad g_5=(2,1,0),\\
g_8=(0,0,1),\quad g_7=(1,0,1),\quad g_8=(2,0,1),\quad
g_{9}=(0,1,1),\quad g_{10}=(1,1,1),\quad g_{11}=(2,1,1).
\end{gather*}
\end{ex}

\subsection{Result}

Let $m_1,m_2,\dots,m_l$ and $n_1,n_2,\dots,n_l$ be positive integers such that
$n_s\mid m_s$ for each $s$. We put $k_s=m_s/n_s$ and
\begin{gather*}
M_s=\prod_{i<s}m_i\quad(s=1,2,\dots,l),\qquad
N_s=\prod_{i<s}n_i\quad(s=1,2,\dots,l),\\
m=\prod_{s=1}^l m_s,\qquad
n=\prod_{s=1}^l n_s,\qquad
k=\prod_{s=1}^l k_s.
\end{gather*}

Let $G=\cy{m_1}\times\cy{m_2}\times\dotsb\times\cy{m_l}$.
We take a subgroup
$H=\cy{n_1}\times\cy{n_2}\times\dotsb\times\cy{n_l}$,
where we regard $\cy{n_s}$ as the subgroup of $\cy{m_s}$ generated by $k_s$:
$\cy{m_s}=\{0,k_s,2k_s,\dots,(m_s-1)k_s\}$.
Notice that $\card G=m$, $\card H=n$ and $\ind GH=k$.
In this case, we have
\begin{equation}\label{eq:matrix in group wreath determinant}
\wM(G,H,\ords,\specp)=\Bigl(q^{\e_l(i,j)}\Bigr)_{\substack{0\le i<n\\ 0\le j<m}},\qquad
\e_l(i,j)=\sum_{s=1}^l {M_s((k_s\floor{i/N_s}-\floor{j/M_s})\bmod m_s)}.
\end{equation}

Then the following factorization of the wreath determinant for a finite abelian group-subgroup pair holds. 
\begin{thm}\label{MainTheorem}
Retain the assumption and notation above. 
%Let $G=\cy{m_1}\times\cy{m_2}\times\dotsb\times\cy{m_l}$ and 
%$H=\cy{n_1}\times\cy{n_2}\times\dotsb\times\cy{n_l}$ with $n_s\mid m_s\, (1\leq s\leq l)$. 
Then one has 
\begin{equation}\label{eq:general finite abelian case}
\wD(G,H,\ords,\specp)=\omega^{(k^n)}(\sigma\tau^{-1})
\Bigl(\frac{k!}{k^k}\Bigr)^n
\prod_{s=1}^l q^{mM_s(k_s-1)/2}
\prod_{s=1}^l (q^{M_sm_s}-1)^{m(1-1/n_s)},
\end{equation}
where $\sigma$ and $\tau$ are permutations of $m$ letters determined by the conditions
\begin{gather*}
(I_{n_l}\otimes\1_{1,k_l})\otimes\dotsb\otimes(I_{n_1}\otimes\1_{1,k_1})=(I_n\otimes\1_{1,k})P(\sigma), \\
P((1~2~\dots~m_l))\otimes\dotsb\otimes P((1~2~\dots~m_1))=P(\tau).
\end{gather*}
The function $\omega^{(k^n)}$ on $\sym m$ is defined by
\begin{equation*}
\omega^{(k^n)}(x)=\frac1{(k!)^n}\sum_{g\in\sym k^n}\chi^{(k^n)}(xg)\qquad(x\in\sym m),
\end{equation*}
where $\chi^{(k^n)}$ is the irreducible character of $\sym m$
corresponding to the partition $(k^n)=(k,\dots,k)\vdash m$.
\end{thm}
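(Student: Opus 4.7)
The plan is to exploit the direct-product structure of both $G$ and $H$ to write $\wM=\wM(G,H,\ords,\specp)$ as a Kronecker product of cyclic factors, then use multilinearity and $GL_n$-covariance of $\wrdet_k$ to reduce the problem to evaluating the alpha-determinant of a combinatorially simpler tensor product; the character value $\omega^{(k^n)}$ will then arise from the failure of $\adet[-1/k]$ to be fully $\sym m$-invariant. From the formula for $\e_l(i,j)$ in \eqref{eq:matrix in group wreath determinant} and the compatibility between the standard ordering and the tensor-product ordering of cyclic indices, I would first verify that $\wM=A_l\otimes A_{l-1}\otimes\dotsb\otimes A_1$, where $A_s$ is the $n_s\times m_s$ cyclic group-subgroup matrix $\bigl(q^{M_s((k_si_s-j_s)\bmod m_s)}\bigr)$ for the pair $(\cy{m_s},\cy{n_s})$. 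Within each factor I would write $A_s=D^{(1)}_s\hat M_s D^{(2)}_s$ with monomial diagonals $D^{(1)}_s=\diag(q^{M_sk_si_s})$, $D^{(2)}_s=\diag(q^{-M_sj_s})$ and an $n_s\times m_s$ ``step'' matrix $\hat M_s$ whose entries are $1$ or $q^{M_sm_s}$ according to the sign of $k_si_s-j_s$. Tensoring gives $\wM=D^{(1)}\tilde M D^{(2)}$ with $D^{(r)}=\bigotimes_sD^{(r)}_s$ and $\tilde M=\bigotimes_s\hat M_s$, so by column multilinearity and the identity $\wrdet_k(gX)=(\det g)^k\wrdet_k(X)$,
\[
\wrdet_k(\wM)=(\det D^{(1)})^k\det(D^{(2)})\cdot\wrdet_k(\tilde M),
\]
the prefactor being computed directly from $\det(A\otimes B)=(\det A)^{\dim B}(\det B)^{\dim A}$.

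I would then evaluate $\wrdet_k(\tilde M)=\adet[-1/k](\tilde M\otimes\1_{k,1})$ by exploiting the further decomposition $\1_{k,1}=\1_{k_l,1}\otimes\dotsb\otimes\1_{k_1,1}$. Via the mixed-product identity, $\tilde M\otimes\1_{k,1}$ becomes, up to the row/column permutations $\sigma$ and $\tau$ from the theorem, the Kronecker product $\bigotimes_s(\hat M_s\otimes\1_{k_s,1})$ of $m_s\times m_s$ matrices. Here $\sigma$ arises from commuting the $\1_{k_s,1}$'s past the $\hat M_s$'s in the tensor ordering (which is precisely how it is defined in the theorem, via $(I_{n_l}\otimes\1_{1,k_l})\otimes\dotsb\otimes(I_{n_1}\otimes\1_{1,k_1})=(I_n\otimes\1_{1,k})P(\sigma)$), while $\tau$ is assembled from the $m_s$-cyclic column shifts that bring each block $\hat M_s\otimes\1_{k_s,1}$ into a canonical ``circulant step'' form. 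I would then expand $\adet[-1/k]$ in immanant components: the content-polynomial vanishing $\prod_{(i,j)\in\lambda}(1+(j-i)\alpha)\big|_{\alpha=-1/k}=0$ for every $\lambda\vdash m$ with $\lambda_1>k$, combined with the fact that the repeated-row structure imposed by $\otimes\,\1_{k,1}$ makes the immanant $\mathrm{Imm}_\lambda$ vanish unless $\lambda$ has at most $n$ rows, leaves only the contribution of the rectangular shape $\lambda=(k^n)$. The tensor-product determinant formula then assembles the block contributions into the Vandermonde-like factor $\prod_s(q^{M_sm_s}-1)^{m(1-1/n_s)}$ of the theorem, and combining the monomial content of the blocks with the prefactor above reproduces $\prod_sq^{mM_s(k_s-1)/2}$.

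The main technical obstacle is the above last step: unlike the ordinary determinant, $\adet[-1/k]$ is \emph{not} $\sym m$-invariant (only $\sym k\wr\sym n$-invariant, by the remark following Proposition~1), so the permutations $\sigma$ and $\tau$ do not commute past $\adet[-1/k]$ for free. The scalar $(k!/k^k)^n=\bigl[\sum_{g\in\sym k}(-1/k)^{\nu(g)}\bigr]^n$ emerges from the $\sym k^n$-symmetry absorbed by the row repetitions within each $\otimes\,\1_{k_s,1}$, while the coset-averaged character value $\omega^{(k^n)}(\sigma\tau^{-1})=\frac{1}{(k!)^n}\sum_{g\in\sym k^n}\chi^{(k^n)}(\sigma\tau^{-1}g)$ records exactly the residual obstruction to $\sym m$-invariance and couples the two permutations into the single scalar appearing in the statement. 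Assembling all the factors yields the stated identity \eqref{eq:general finite abelian case}.
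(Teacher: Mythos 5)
Your opening moves are sound and partly coincide with the paper: the identification $\wM(G,H,\ords,\specp)=A_l\otimes\dotsb\otimes A_1$ with cyclic factors $A_s=\bigl(q^{M_s((k_si-j)\bmod m_s)}\bigr)$ is exactly the paper's Kronecker-factorization lemma, and peeling off the monomial diagonals to get $\wrdet_k(\wM)=(\det D^{(1)})^k\det(D^{(2)})\,\wrdet_k(\tilde M)$ is legitimate by left $GL_n$-covariance and column multilinearity. The argument breaks down at the step you yourself flag as the last one. There is no ``tensor-product determinant formula'' for $\wrdet_k$ (or for immanants, or for $\adet[-1/k]$) of a Kronecker product of rectangular matrices, and the assertion that the block contributions of the step matrices $\hat M_s$ assemble into $\prod_s(q^{M_sm_s}-1)^{m(1-1/n_s)}$ together with $\omega^{(k^n)}(\sigma\tau^{-1})(k!/k^k)^n$ is precisely the content of the theorem, not an available identity. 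In the paper that factor arises as $(\det T)^k$, where $T$ is a genuine square $n\times n$ matrix built from the subgroup circulants $T(n_s;q^{k_sM_s})$, using $\det T(n;x)=(1-x^n)^{n-1}$; reaching that point requires the structural lemma
$T(m_s,n_s;x)=P(\sigma_s)\cdot T(n_s;x^{k_s})\otimes\1_{1,k_s}\cdot I_{n_s}\otimes\diag(x^{k_s-1},\dots,x,1)\cdot P(\tau_s)^{-1}$,
which exhibits each cyclic group-subgroup matrix as (invertible square)$\times$(column repetition)$\times$(diagonal)$\times$(column permutation). Your step matrices never recover this circulant, so you would still have to prove an equivalent of that lemma; saying that suitable ``cyclic column shifts'' exist neither establishes this form nor identifies your $\tau$ with the $\tau$ in the statement.

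The second missing ingredient is the evaluation that actually produces $\omega^{(k^n)}$. After the factorization above the paper writes $\wM=T\cdot(I_n\otimes\1_{1,k})P(\sigma\tau^{-1})\cdot P(\tau)DP(\tau)^{-1}$ and invokes, from \cite{K2014}, the identity $\wrdet_k\bigl(A\,(I_n\otimes\1_{1,k})P(x)\,D'\bigr)=\omega^{(k^n)}(x)\bigl(\frac{k!}{k^k}\bigr)^n(\det A)^k\det D'$ for square invertible $A$ and diagonal $D'$; your remark that the coset-averaged character ``records exactly the residual obstruction'' describes this identity but does not derive it. Your immanant expansion with the content-polynomial zero at $\alpha=-1/k$ and the repeated-row (dominance/Kostka) argument does correctly isolate the rectangular shape $(k^n)$ --- this is indeed the mechanism behind the cited result --- but you stop before computing the surviving $\mathrm{Imm}_{(k^n)}$ of the permuted repeated-row matrix, which is exactly where $\sigma\tau^{-1}$, the average over $\sym k^n$, and the scalar $(k!/k^k)^n$ must be extracted. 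Two further warning signs: your peeled prefactor equals $\prod_s q^{-mM_s(k_s-1)/2}$, the reciprocal of the monomial in \eqref{eq:general finite abelian case}, so $\wrdet_k(\tilde M)$ would have to contribute $\prod_s q^{mM_s(k_s-1)}$, and nothing in the sketch verifies this; and you conflate the row-duplication $\1_{k,1}$ in the definition of $\wrdet_k$ with the column-repetition blocks $I_{n_s}\otimes\1_{1,k_s}$ inside $\wM$ (the $\sigma$ of the statement comes from rewriting $(I_{n_l}\otimes\1_{1,k_l})\otimes\dotsb\otimes(I_{n_1}\otimes\1_{1,k_1})$ as $(I_n\otimes\1_{1,k})P(\sigma)$), while $\adet[-1/k]$ is not invariant under the separate row and column permutations you introduce ``up to'', so they cannot be moved around without an explicit accounting. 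As it stands, the proposal reproduces the easy reductions but leaves the two lemmas that carry the proof unproved.
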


The proof of the theorem will be given in the subsequent subsection. 

\begin{ex}[Cyclic group case]\label{ex:cyclic group case}
If $l=1$, then \eqref{eq:general finite abelian case} reads
\begin{equation*}
\wD(\cy{m},\cy n,\ords,\specp)=\omega^{(k^n)}(\tau^{-1})
\Bigl(\frac{k!}{k^k}\Bigr)^n
q^{m(k-1)/2}
(q^{m}-1)^{k(n-1)},\qquad
\tau=(1~2~\dots~m).
\end{equation*}
By Remark 5.5 in \cite{K2014}, we have
\begin{equation*}
\omega^{(k^n)}(\tau^{-1})=\omega^{(k^n)}(\tau)
=\frac{\text{the coefficient of $(x_{11}x_{22}\dots x_{nn})^{k-1}x_{12}x_{23}\dots x_{n1}$ in $(\det X)^k$}}
{\left|\sym k^n:\sym k^n\cap\tau^{-1}\sym k^n\tau\right|}.
\end{equation*}
It is elementary to see that
\begin{equation*}
\text{the coefficient of $(x_{11}x_{22}\dots x_{nn})^{k-1}x_{12}x_{23}\dots x_{n1}$ in $(\det X)^k$}=(-1)^{n-1}k
\end{equation*}
and
\begin{equation*}
\left|\sym k^n:\sym k^n\cap\tau^{-1}\sym k^n\tau\right|=\frac{k!^n}{(k-1)!^n}=k^n.
\end{equation*}
Thus we have
\begin{equation*}
\omega^{(k^n)}(\tau)=\Bigl(-\frac1k\Bigr)^{n-1}.
\end{equation*}
Hence it follows that
\begin{equation}
\wD(\cy{m},\cy n,\ords,\specp)
=\Bigl(-\frac1k\Bigr)^{n-1}\Bigl(\frac{k!}{k^k}\Bigr)^n
q^{m(k-1)/2}(q^{m}-1)^{k(n-1)}.
\end{equation}
\end{ex}

\begin{ex}
If $n_s=m_s$ for $s=1,2,\dots,r$ and $n_s=1$ for $s=r+1,\dots,l$,
then \eqref{eq:general finite abelian case} reads
\begin{align*}
&\wD(\cy{m_1}\times\dotsb\times\cy{m_l},\cy{m_1}\times\dotsb\times\cy{m_r},\ords,\specp) \\
&=\omega^{(k^n)}(\sigma\tau^{-1})
\Bigl(\frac{k!}{k^k}\Bigr)^n
\prod_{s=r+1}^l q^{mM_s(m_s-1)/2}
\prod_{s=1}^r (q^{M_sm_s}-1)^{m(1-1/n_s)} \\
&=\omega^{(k^n)}(\sigma\tau^{-1})
\Bigl(\frac{k!}{k^k}\Bigr)^n
q^{n^2k(k-1)/2}
\left\{\prod_{s=1}^r (q^{m_1m_2\dots m_s}-1)^{n(1-1/n_s)}\right\}^k.
\end{align*}
\end{ex}

\begin{ex}
Let $G=\cy n\times\cy n$ and consider the subgroups
$H=\cy n\times\cy 1$, $H'=\cy1\times\cy n$ of $G$.
We have
\begin{align*}
\wD(G,H,\ords,\specp)&=
\omega^{(n^n)}(\sigma\tau^{-1})
\Bigl(\frac{n!}{n^n}\Bigr)^n
q^{n^3(n-1)/2}(q^n-1)^{n(n-1)},\\
\wD(G,H',\ords,\specp)&=
\omega^{(n^n)}(\tau^{-1})
\Bigl(\frac{n!}{n^n}\Bigr)^n
q^{n^2(n-1)/2}(q^{n^2}-1)^{n(n-1)},
\end{align*}
where the permutations $\sigma,\tau\in\sym{n^2}$ are given by the conditions
\begin{equation*}
P(\tau)=P((1~2~\dots~n))\otimes P((1~2~\dots~n)),\qquad
\1_{1,n}\otimes I_n=(I_n\otimes\1_{1,n})P(\sigma).
\end{equation*}
By a similar calculation given in Example \ref{ex:cyclic group case},
we see that $\omega^{(n^n)}(\tau^{-1})=1$ and
\begin{equation*}
\omega^{(n^n)}(\sigma\tau^{-1})
=\frac{\AT(n)}{n!^n},\qquad
\AT(n)=\text{the coefficient of $\prod_{i,j=1}^nx_{ij}$ in $(\det X)^n$}.
\end{equation*}
Hence we have
\begin{align*}
\wD(G,H,\ords,\specp)&=
\frac{\AT(n)}{n^{n^2}}
q^{n^3(n-1)/2}(q^n-1)^{n(n-1)},\\
\wD(G,H',\ords,\specp)&=
\Bigl(\frac{n!}{n^n}\Bigr)^n
q^{n^2(n-1)/2}(q^{n^2}-1)^{n(n-1)}.
\end{align*}
When $n=2$, for instance, we have $\sigma=(2~3)$, $\tau=(1~4)(2~3)$, and
\begin{equation*}
\omega^{(2^2)}(\sigma\tau^{-1})=-\frac12,\qquad
\omega^{(2^2)}(\tau^{-1})=1.
\end{equation*}
This partially recovers Example \ref{ex:Klein 4-group}.
In the case where $H''=\Delta\cy n:=\set{(x,x)}{x\in\cy n}$,
the wreath determinant $\wD(G,H'',\ords,\specp)$ would not have simple expression.
%To calculate $\wD(G,H'',\ords,\specp)$
%for a subgroup $H''=\Delta G=\set{(x,x)}{x\in\cy n}$ of $G$,
%we need different discussion.
\end{ex}

\begin{rem}
The number $\abs{\AT(n)}$ is equal to the difference of
the numbers of even and odd \emph{Latin squares} of size $n$.
It is conjectured that $\AT(n)\ne0$ if $n$ is even (Alon-Tarsi Conjecture \cite{AT1992}).
It is easy to see that $\AT(n)=0$ if $n$ is odd and $n\ge3$.
\end{rem}

%\begin{thm}[Hamamoto (2013)]
%Let $G=\cy{n_1}\times\dots\times \cy{n_l}$,
%$H=\cy{n_1}\times\dots\times\cy{n_r}$.
%\begin{gather}
%\wD(G,H,\ords,\specp)
%=\omega^\rho(\sigma)
%\Bigl(\frac{k!}{k^k}\Bigr)^n
%q^{n^2k(k-1)/2}
%\left\{\prod_{i=1}^l(1-q^{n_1\dots n_i})^{n(1-1/n_i)}\right\}^k,
%\end{gather}
%where $n=\card{H}$ and $k=\card{G/K}$.
%The permutation $\sigma\in\sym {kn}$ is defined by
%\begin{equation*}
%\sigma\bigl((j-1)n+i\bigr)=(i-1)k+j\qquad(1\le i\le n,\ 1\le j\le k),
%\end{equation*}
%and $\omega^\rho(\sigma)=\frac1{\rho!}\sum_{\tau\in\sym\rho}\chi^\rho(\sigma\tau)$ with $\rho=(k^n)$.
%\qed
%\end{thm}
%
%\begin{thm}[Tachibana (2010)]
%\begin{equation}\label{eq:Tachibana}
%\wD(\cy{kn},\cy{n},\ords,\specp)
%=\Bigl(-\frac1k\Bigr)^{n-1}\Bigl(\frac{k!}{k^k}\Bigr)^nq^{nk(k-1)/2}(q^{kn}-1)^{k(n-1)}.
%\end{equation}
%\qed
%\end{thm}

%\begin{thm}[Hamamoto]
%If $n\ge3$, then
%\begin{equation}
%\wD(D_n,\{e,\sigma,\dots,\sigma^{n-1}\},\ords,\speco)=0.
%\end{equation}
%\qed
%\end{thm}

\subsection{Proof of the theorem}

Put
\begin{equation}
T(m,n;x):=\bigl(x^{(ki-j)\bmod m}\bigr)_{\substack{0\le i<n\\ 0\le j<m}}
\qquad(m=kn)
\end{equation}
and $T(m;x):=T(m,m;x)$.
Notice that
$\wD(\cy{m},\cy{n},\ords,\specp)=\wrdet_k T(m,n;q)$.
It is elementary to see the
\begin{lem}
It holds that
$\det T(m;x)=(1-x^m)^{m-1}$.
\qed
\end{lem}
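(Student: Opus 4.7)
My plan is to reduce $T(m;x)$ to upper-triangular form by a single left-to-right sweep of column operations; the circulant pattern of the entries should produce a diagonal consisting almost entirely of copies of $1-x^m$.

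The key observation concerns neighbouring columns. Let $v_j$ denote the $j$-th column of $T(m;x)$, so $(v_j)_i = x^{(i-j)\bmod m}$. For every row index $i\neq j$ the exponent $(i-j)\bmod m$ is at least $1$, hence $(i-j-1)\bmod m = (i-j)\bmod m - 1$, which gives $(v_j)_i = x\,(v_{j+1})_i$. Only in the wrap-around row $i=j$ does this fail: there $(v_j)_j = 1$ while $(v_{j+1})_j = x^{m-1}$, producing a discrepancy of $1-x^m$. Hence $v_j - x\,v_{j+1} = (1-x^m)\,e_j$ for each $j$, where $e_j$ is the $j$-th standard basis vector of $\C^m$.

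I would then perform the determinant-preserving replacements $v_j \leftarrow v_j - x\,v_{j+1}$ for $j=0,1,\dots,m-2$ in that order; each step uses the still-untouched column immediately to its right, so the operations do not interfere with one another. The resulting matrix is upper-triangular with diagonal $(1-x^m,1-x^m,\dots,1-x^m,1)$, the final $1$ coming from the unchanged entry $T(m;x)_{m-1,m-1}=x^0=1$, so $\det T(m;x) = (1-x^m)^{m-1}$. I do not anticipate any serious obstacle here; the only point to watch is the ordering of the column operations, which is handled automatically by sweeping from left to right. An alternative route would be to recognise $T(m;x)$ as a circulant matrix and diagonalise it using $m$-th roots of unity, but the column-reduction approach is more direct and avoids the need to evaluate $f(\omega^j)$ explicitly.
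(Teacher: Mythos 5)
Your argument is correct and complete: the identity $v_j - x\,v_{j+1} = (1-x^m)e_j$ is verified accurately, the left-to-right sweep indeed only ever subtracts a multiple of a still-unmodified column (equivalently, you are right-multiplying $T(m;x)$ by a unit bidiagonal matrix of determinant $1$), and the resulting matrix is upper triangular with diagonal $(1-x^m,\dots,1-x^m,1)$, the untouched last column being $(x,x^2,\dots,x^{m-1},1)^{\mathsf T}$. The paper offers no proof at all --- the lemma is stated as ``elementary'' with an immediate \textit{qed} --- so there is no argument of the authors to compare against; your column-reduction is a clean self-contained justification, and the circulant diagonalization you mention in passing (eigenvalues $\sum_{r=0}^{m-1}(x\omega^j)^r$, whose product telescopes to $(1-x^m)^{m-1}$) would serve equally well, at the cost of working over $\C$ with roots of unity rather than by ring-level column operations.
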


We notice the following elementary fact on the Kronecker product of two matrices:
If
\begin{equation*}
A=\bigl(a(i,j)\bigr)_{\substack{0\le i<m\\ 0\le j<n}},\qquad
B=\bigl(b(i,j)\bigr)_{\substack{0\le i<p\\ 0\le j<s}},
\end{equation*}
then we have
\begin{equation}\label{eq:explicit Kronecker product}
A\otimes B=\Bigl(
a(\floor{i/p},\floor{j/s})b(i\bmod p,j\bmod s)
\Bigr)_{\substack{0\le i<mp\\ 0\le j<ns}}.
\end{equation}

\begin{lem}
It holds that
\begin{equation}
T(m,n;x)=P(\sigma)\cdot T(n;x^k)\otimes\1_{1,k}\cdot I_n\otimes\diag(x^{k-1},\dots,x,1)\cdot P(\tau)^{-1},
\end{equation}
where $m=kn$, $\sigma=(1~2~\dots~n)\in\sym n$ and $\tau=(1~2~\dots~m)\in\sym m$.
\end{lem}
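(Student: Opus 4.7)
The plan is to verify the identity entrywise, working from the right-hand side outward using the Kronecker product formula \eqref{eq:explicit Kronecker product}, and then comparing with the closed form $T(m,n;x)_{ij}=x^{(ki-j)\bmod m}$ on the left.

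First, I decompose each column index $j\in\{0,\dots,m-1\}$ uniquely as $j=qk+r$ with $0\le q<n$ and $0\le r<k$; this decomposition is exactly what the Kronecker structure on the right sees. Using \eqref{eq:explicit Kronecker product} with $A=T(n;x^k)$ and $B=\1_{1,k}$ gives $(T(n;x^k)\otimes\1_{1,k})_{i,j}=x^{k((i-q)\bmod n)}$. Right-multiplication by the diagonal matrix $I_n\otimes\diag(x^{k-1},\dots,x,1)$ scales column $j$ by $x^{k-1-r}$, and the cycles $\sigma=(1~2~\dots~n)$ and $\tau=(1~2~\dots~m)$ act on the matrix entries by $(P(\sigma)M)_{ij}=M_{(i-1)\bmod n,\,j}$ on rows and $(MP(\tau)^{-1})_{ij}=M_{i,\,(j-1)\bmod m}$ on columns. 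Composing these four operations yields a closed expression for the $(i,j)$-entry of the full right-hand side in terms of $i$, $q$, $r$.

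The final step is a case analysis on $r$: when $r\ge 1$ the column shift $j\mapsto(j-1)\bmod m$ preserves $q$ and sends $r\mapsto r-1$, while when $r=0$ it introduces a ``carry'' $(q,0)\mapsto((q-1)\bmod n,\,k-1)$. In each case I would verify that the resulting exponent coincides with $(k(i-q)-r)\bmod m$, further splitting into the subcases $i>q$ and $i\le q$ to handle the modular reduction by $m=kn$. The main obstacle is the boundary case $r=0$: one must check that the carry produced by the column shift is exactly balanced by the row shift $i\mapsto(i-1)\bmod n$ induced by $P(\sigma)$, which is why the particular cycles $\sigma$ and $\tau$ in the statement are forced. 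Once the boundary case is correctly aligned, the remaining subcases reduce to routine range checks on $k(i-q)-r$ modulo $m$.
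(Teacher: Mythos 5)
Your proposal is correct and follows essentially the same route as the paper: an entrywise verification of the identity via the explicit Kronecker product formula \eqref{eq:explicit Kronecker product}, with the exponent comparison done modulo $m$. The only difference is bookkeeping: the paper conjugates, computing $P(\sigma)^{-1}T(m,n;x)P(\tau)$ so that the cycles act as uniform $+1$ shifts on the simple exponent $(ki-j)\bmod m$, and then the middle factor's exponent $e(i,j)$ is matched by noting it lies in $[0,m)$ and is congruent to $(k(i+1)-(j+1))\bmod m$ --- this removes your case analysis on $r=0$ versus $r\ge1$ and the subcases for the modular reduction, which your sketch would otherwise have to carry out explicitly.
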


\begin{proof}
The $(i,j)$-entry of $P(\sigma)^{-1}T(m,n;x)P(\tau)$ is
\begin{equation*}
x^{(k\sigma(i)-\tau(j))\bmod m}=x^{(k(i+1)-(j+1))\bmod m}.
\end{equation*}
On the other hand, the $(i,j)$-entry of $T(n;x^k)\otimes\1_{1,k}\cdot I_n\otimes\diag(x^{k-1},\dots,x,1)$ is
given by 
\begin{equation*}
x^{e(i,j)},\qquad
e(i,j)=k\bigl\{ (i-\floor{j/k}) \bmod n\bigr\}+k-1-(j\bmod k).
\end{equation*}
Since
\begin{equation*}
0\le e(i,j)<m,\qquad
k\bigl\{ (i-\floor{j/k}) \bmod n\bigr\}=(ki-k\floor{j/k})\bmod m,\qquad
j=k\floor{j/k}+(j\bmod k),
\end{equation*}
we have
\begin{equation*}
e(i,j)=e(i,j)\bmod m=\Bigl( ki-k\floor{j/k}+k-1-(j\bmod k)\Bigr)\bmod m=\Bigl( k(i+1)-(j+1) \Bigr)\bmod m
\end{equation*}
as desired.
%
% $k\bigl\{ (i-\floor{j/k}) \bmod n\bigr\}=(ki-k\floor{j/k})\bmod m$
%and $j=k\floor{j/k}+(j\bmod k)$, we have
%\begin{equation*}
%e(i,j)\equiv ki-k\floor{j/k}+k-1-(j\bmod k)\equiv k(i+1)-(j+1) \pmod m.
%\end{equation*}
%
%Since $0\le e(i,j)<m$ and $k\bigl\{ (i-\floor{j/k}) \bmod n\bigr\}=(ki-k\floor{j/k})\bmod m$,
%we have
%\begin{equation*}
%e(i,j)=e(i,j)\bmod m=\bigl(ki-k\floor{j/k}+k-1-(j\bmod k)\bigr)\bmod m
%\end{equation*}
%
%Notice that $k\bigl\{ (i-\floor{j/k}) \bmod n\bigr\}=(ki-k\floor{j/k})\bmod m$.
%
%Follows from the following facts.
%\begin{enumerate}
%\item
%$k\sigma(i)-\tau(j)\equiv k(i+1)-(j+1) \pmod m$,
%\item
%$0\le \Bigl\{ (ki-k\floor{j/k}) \bmod m \Bigr\}+k-1-(j\bmod k)<m$,
%\item
%$\Bigl\{ (ki-k\floor{j/k}) \bmod m \Bigr\}+k-1-(j\bmod k)\equiv k(i+1)-(j+1) \pmod m$,
%\item
%$(ki-k\floor{j/k}) \bmod m = k\bigl\{ (i-\floor{j/k}) \bmod n\bigr\}$.
%\end{enumerate}
%We have
%\begin{equation*}
%T(n;x^k)\otimes\1_{1,k}
%=\Bigl((x^k)^{(i-\floor{j/k})\bmod n}\Bigr)_{\substack{0\le i<n\\ 0\le j<m}}
%=\Bigl(x^{(ki-k\floor{j/k})\bmod m}\Bigr)_{\substack{0\le i<n\\ 0\le j<m}}
%\end{equation*}
%and
%\begin{equation*}
%I_n\otimes\diag(x^{k-1},\dots,x,1)=\diag\Big(x^{k-1-(i\bmod k)}\Bigr)_{0<i\le m}.
%\end{equation*}
%So we have
%\begin{equation*}
%T(n;x^k)\otimes\1_{1,k}\cdot I_n\otimes\diag(x^{k-1},\dots,x,1)
%=\Bigl(x^{((ki-k\floor{j/k})\bmod m)+k-1-(j\bmod k)}\Bigr)_{\substack{0\le i<n\\ 0\le j<m}}
%\end{equation*}
\end{proof}

\begin{lem}
It holds that
\begin{equation*}
\wM(G,H,\ords,\specp)=T(m_l,n_l;q^{M_l})\otimes\dotsb
\otimes T(m_2,n_2;q^{M_2})\otimes T(m_1,n_1;q^{M_1}).
\end{equation*}
\end{lem}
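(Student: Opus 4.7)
The plan is to compute the $(i,j)$-entry of the right-hand side using the iterated Kronecker product formula \eqref{eq:explicit Kronecker product} and match it term-by-term with the exponent $\e_l(i,j)$ in \eqref{eq:matrix in group wreath determinant}. Since the factor $T(m_s,n_s;q^{M_s})$ has size $n_s\times m_s$ with $(i_s,j_s)$-entry $q^{M_s((k_si_s-j_s)\bmod m_s)}$, the whole product has size $n\times m$ (recall $n=\prod n_s$, $m=\prod m_s$), as required.

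First I would iterate \eqref{eq:explicit Kronecker product}: for $A_l\otimes\dotsb\otimes A_1$ with the convention that $A_l$ is the outermost factor, the row index $i\in\{0,\dots,n-1\}$ and column index $j\in\{0,\dots,m-1\}$ decompose uniquely in mixed radix as
\begin{equation*}
i=\sum_{s=1}^l i_s N_s,\qquad j=\sum_{s=1}^l j_s M_s,\qquad
i_s=\floor{i/N_s}\bmod n_s,\quad j_s=\floor{j/M_s}\bmod m_s,
\end{equation*}
and the $(i,j)$-entry of the product equals $\prod_{s=1}^l a_s(i_s,j_s)$. Applying this to $a_s(i_s,j_s)=q^{M_s((k_si_s-j_s)\bmod m_s)}$, one obtains $q^{E(i,j)}$ with $E(i,j)=\sum_{s=1}^l M_s\bigl((k_si_s-j_s)\bmod m_s\bigr)$.

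Next I would verify that $E(i,j)=\e_l(i,j)$. Since $i_s=\floor{i/N_s}\bmod n_s$, we have $k_si_s\equiv k_s\floor{i/N_s}\pmod{k_sn_s}$, and $k_sn_s=m_s$, so $k_si_s\equiv k_s\floor{i/N_s}\pmod{m_s}$. Likewise $j_s\equiv \floor{j/M_s}\pmod{m_s}$ directly. Hence
\begin{equation*}
(k_si_s-j_s)\bmod m_s = (k_s\floor{i/N_s}-\floor{j/M_s})\bmod m_s,
\end{equation*}
which is exactly the summand appearing in $\e_l(i,j)$.

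The steps are essentially bookkeeping: there is no real obstacle beyond correctly matching the mixed-radix decomposition of $i$ and $j$ to the ordering $\ords$ used in \eqref{eq:matrix in group wreath determinant}, and keeping track of the two congruences above. Once both sides are seen to have the same $(i,j)$-entry $q^{\e_l(i,j)}$ for all admissible $i,j$, the identity follows.
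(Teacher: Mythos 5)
Your proposal is correct and takes essentially the same approach as the paper: both compare the $(i,j)$-entries of the two sides via \eqref{eq:explicit Kronecker product} and reduce the identity of exponents to the congruences $k_s i_s\equiv k_s\floor{i/N_s}$ and $j_s\equiv\floor{j/M_s}\pmod{m_s}$. The only difference is organizational: the paper runs an induction on $l$, peeling off the outermost factor and proving a recursion for $\e_{r+1}$ using $n_s\mid(N_{r+1}/N_s)$ and $m_s\mid(M_{r+1}/M_s)$, while you invoke the iterated mixed-radix entry formula for the $l$-fold Kronecker product all at once — and justifying that formula is exactly the same divisibility bookkeeping.
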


\begin{proof}
The assertion is trivial when $l=1$.
%Put
%\begin{equation*}
%\e_r(i,j)=\sum_{s=1}^{r} {M_s((k_s\floor{i/N_s})-\floor{j/M_s})\bmod m_s}
%\qquad(0\le i<n_1\dots n_r,\ 0\le j<m_1\dots m_r).
%\end{equation*}
In view of \eqref{eq:matrix in group wreath determinant} and \eqref{eq:explicit Kronecker product},
it suffices to prove that
\begin{equation*}
\e_{r+1}(i,j)=M_{r+1}((k_{r+1}\floor{i/N_{r+1}}-\floor{j/M_{r+1}})\bmod m_{r+1})+\e_r(i\bmod N_{r+1},j\bmod M_{r+1})
\end{equation*}
for $r\ge1$.
For this purpose, we have only to see
\begin{equation*}
M_s((k_s\floor{i/N_s})-\floor{j/M_s})
\equiv
M_s((k_s\floor{(i\bmod N_{r+1})/N_s})-\floor{(j\bmod M_{r+1})/M_s})
\pmod{m_s}
\end{equation*}
when $s\le r$. This is easily verified since $n_s\mid(N_{r+1}/N_s)$ and $m_s\mid(M_{r+1}/M_s)$.
\end{proof}

By the lemmas above, we have
\begin{equation*}
\wM(G,H,\ords,\specp)
%T(m_l,n_l;q^{M_l})\otimes\dotsb
%\otimes T(m_2,n_2;q^{M_2})\otimes T(m_1,n_1;q^{M_1})
=TJDP(\tau)^{-1},
\end{equation*}
where
\begin{gather*}
T=P(\sigma_l)T(n_l;q^{k_lM_l})\otimes\dotsb\otimes P(\sigma_1)T(n_1;q^{k_1M_1}), \\
J=(I_{n_l}\otimes\1_{1,k_l})\otimes\dotsb\otimes(I_{n_1}\otimes\1_{1,k_1}), \\
D=(I_{n_l}\otimes\diag(q^{(k_l-1)M_l},\dots,q^{M_l},1))\otimes\dotsb\otimes
(I_{n_1}\otimes\diag(q^{(k_1-1)M_1},\dots,q^{M_1},1)),
\end{gather*}
and $\tau\in\sym m$ is determined by
\begin{gather*}
P(\tau)=P(\tau_l)\otimes\dotsb\otimes P(\tau_1)
\qquad(\tau_s=(1~2~\dots~m_s)\in\sym{m_s}).
\end{gather*}
We see that
\begin{equation*}
J=(I_n\otimes\1_{1,k})P(\sigma)
\end{equation*}
for some $\sigma\in\sym m$.
Thus we have
\begin{equation*}
\wM(G,H,\ords,\specp)=T\cdot(I_n\otimes\1_{1,k})P(\sigma\tau^{-1})\cdot P(\tau)DP(\tau)^{-1}.
\end{equation*}
It follows then
\begin{equation*}
\wrdet_k \wM(G,H,\ords,\specp)
=\omega^{(k^n)}(\sigma\tau^{-1})\Bigl(\frac{k!}{k^k}\Bigr)^n(\det T)^k\det D.
\end{equation*}
Since $\det P(\sigma_s)=(-1)^{n_s-1}$, we have%T(n_s;q^{k_sM_s})=(q^{m_sM_s}-1)^{n_s-1}$
\begin{gather*}
(\det T)^k
=\prod_{s=1}^l \Bigl((q^{M_sm_s}-1)^{n_s-1}\Bigr)^{kn/n_s}
=\prod_{s=1}^l (q^{M_sm_s}-1)^{m(1-1/n_s)}, \\
\det D=\prod_{s=1}^l (q^{n_sM_sk_s(k_s-1)/2})^{m/m_s}
=\prod_{s=1}^l q^{mM_s(k_s-1)/2}.
\end{gather*}
%Therefore we have the theorem.
This completes the proof of Theorem \ref{MainTheorem}. 

%%%%%%%%%%%%%%%%%%%%%%%%%%
\section{Direct product case}
%%%%%%%%%%%%%%%%%%%%%%%%%%%%%%%
\subsection{Products of orderings and specializations}

Assume that
$G=G_1\times\dots\times G_l$,
and each $G_s$ is a group of order $m_s$ equipped with an ordering $\ord_s$
and a specialization $\spec_s\colon G_s\to R_s$, where $R_s$ is a ring.
Take a subgroup $H=H_1\times\dotsb\times H_l$ of $G$,
where $H_s$ is a subgroup of $G_s$ of order $n_s$ for each $s$.
We put $k_s=m_s/n_s$.
Let us fix a complete system of representatives $Z_s=\{z^s_0,z^s_1,\dots,z^s_{k_s-1}\}$ for each coset $G_s/H_s$.
We suppose that each ordering $\ord_s$ is a \emph{homogeneous ordering} in the sense that
\begin{equation*}
\ord_s(k_si+j)=z^s_j\;\ord_s(k_si)\quad(0\le j<k_s,\ 0\le i<n_s),
\qquad H_s=\set{\ord_s(k_si)}{0\le i<n_s}.
\end{equation*}

Put
\begin{equation}
M_j=\prod_{i=1}^{j-1}m_i\qquad(j=1,2,\dots,l),\qquad
m=m_1m_2\dots m_l=\card G,
\end{equation}
and take an ordering $\ord$ given by
\begin{equation}
g_{i}=\ord(i)
=\Bigl(\ord_1(\floor{i/M_1}\bmod m_1),\dots,\ord_l(\floor{i/M_l}\bmod m_l)\Bigr)
\qquad(i=0,1,\dots,m-1).
\end{equation}
We also take a specialization $\spec\colon G\to R=R_1\times\dots\times R_l$ given by
\begin{equation*}
\spec((x_1,\dots,x_l))=\spec_1(x_1)^{M_1}\dotsb\spec_l(x_l)^{M_l}\qquad(x_s\in G_s).
\end{equation*}

We have then
\begin{equation*}
\wM(G,H,\ord,\spec)=\left(
\prod_{s=1}^l \spec_s\bigl(\ord_s( (k_s\floor{i/N_s}-\floor{j/M_s})\bmod m_s)\bigr)^{M_s}
\right)_{\substack{0\le i<n \\ 0\le j<m}}.
\end{equation*}
By the same machinery in the discussion of the previous section,
we have
\begin{equation*}
\wM(G,H,\ord,\spec)=\wM(G_l,H_l,\ord_l,\spec_l^{M_l})\otimes\dotsb\otimes
\wM(G_2,H_2,\ord_2,\spec_2^{M_2})\otimes\wM(G_1,H_1,\ord_1,\spec_1^{M_1}),
\end{equation*}
where $\spec_s^{M_s}$ denotes the map which sends $g\in G_s$ to $\spec_s(g)^{M_s}\in R_s$.

\subsection{Special homogeneous case}

We look at the case where $l=1$.
We put
\begin{equation*}
H=\{h_0,h_1,\dots,h_{n-1}\},\qquad
Z=\{z_0,\dots,z_{k-1}\},
\end{equation*}
so that we have $G=\set{zh}{z\in Z,h\in H}$.
We choose $h_0=z_0$ to be the identity of $G$.
The homogeneous ordering of $G$ is
\begin{equation}
\ord(ik+j)=z_jh_i\qquad(0\le i< n,\ 0\le j< k).
\end{equation}
If we can factor the matrix $\wM(G,H,\ord,\spec)$ as
\begin{equation*}
\wM(G,H,\ord,\spec)=P(\sigma)\cdot\wM(H,\spec)\otimes\1_{1,k}
\cdot I_n\otimes\Psi(Z)\cdot P(\tau)^{-1},\qquad
\Psi(Z)=\diag(\psi(z_0),\psi(z_1),\dots,\psi(z_{k-1}))
\end{equation*}
for some $\sigma,\tau\in\sym m$ and some function $\psi\colon Z\to R$,
then we call the specialization $\spec$ to be \emph{separable} along with $\psi$.
If $\spec$ is separable, then we have
\begin{gather*}
\wD(G,H,\ord,\spec)
=(\sgn\sigma)^k\omega^{(k^n)}(\tau^{-1})
\Bigl(\frac{k!}{k^k}\Bigr)^n\prod_{s=0}^{k-1}\psi(z_s)^n
\wD(H,\spec)^k.
\end{gather*}

\begin{ex}
If $H=G$, then we have
\begin{equation*}
\wM(G,G,\ord,\spec)=P(e)\cdot\wM(G,\spec)\otimes\1_{1,1}
\cdot I_n\otimes\Psi\cdot P(e)^{-1},\qquad
\Psi=(1).
\end{equation*}
Hence $\spec$ is separable.
\end{ex}

\begin{ex}
If $H=\{e\}$, then we have
\begin{equation*}
\wM(G,\{e\},\ord,\spec)=P(e)\cdot\wM(\{e\},\spec)\otimes\1_{1,m}
\cdot I_1\otimes\Psi\cdot P(e)^{-1},\qquad
\Psi=\diag(\psi(g_0),\psi(g_1),\dots,\psi(g_{m-1})),
\end{equation*}
where $\psi(g)=\spec(g^{-1})/\spec(e)$.
Hence $\spec$ is separable.
\end{ex}

By the same discussion in the finite abelian case,
we have the
\begin{thm}
Let $G_1,\dots,G_l$ be finite groups, and $H_s$ be a subgroup of $G_s$ for each $s=1,\dots,l$.
Fix a complete system of representatives $Z_s$ for each coset $G_s/H_s$.
Denote by $\ord_s$, $\spec_s$ homogeneous orderings and specializations for $(G_s,H_s)$ respectively.
If each specialization $\spec_s$ is separable along with a function $\psi_s$,
then the wreath determinant for the pair $G=G_1\times\dotsb\times G_l$
and $H=H_1 \times\dotsb\times H_l$ is
\begin{equation*}
\wD(G,H)=(\sgn\sigma)^k
\omega^{(k^n)}(\sigma\tau^{-1})
\Bigl(\frac{k!}{k^k}\Bigr)^n
\prod_{s=1}^l\Psi(Z_s)^{m/k_s}
\prod_{s=1}^l\wD(H_s;\spec_s^{M_s})^{m/n_s},
\end{equation*}
where $m_s=\card{G_s}$, $n_s=\card{H_s}$, $k_s=\card{Z_s}$, $m=\card{G}$, $n=\card{H}$, $k=\card{G/H}$
and $\sigma,\tau$ are certain permutations of $m$ letters.
\qed
\end{thm}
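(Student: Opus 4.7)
The plan is to mimic the argument used for Theorem \ref{MainTheorem}, replacing the cyclic-group ingredient $T(m_s,n_s;q^{M_s})$ by the general separable factorization at each coordinate. The key observation is that the Kronecker decomposition
\begin{equation*}
\wM(G,H,\ord,\spec)=\wM(G_l,H_l,\ord_l,\spec_l^{M_l})\otimes\dotsb\otimes\wM(G_1,H_1,\ord_1,\spec_1^{M_1})
\end{equation*}
established at the end of the previous subsection reduces the problem to manipulating a tensor product of matrices each of which is already in the standard ``separable'' form. Specifically, by the definition of separability, for each $s$ there exist $\sigma_s,\tau_s\in\sym{m_s}$ such that
\begin{equation*}
\wM(G_s,H_s,\ord_s,\spec_s^{M_s})=P(\sigma_s)\cdot\wM(H_s,\spec_s^{M_s})\otimes\1_{1,k_s}\cdot I_{n_s}\otimes\Psi_s(Z_s)\cdot P(\tau_s)^{-1}.
\end{equation*}

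Next, I would substitute these factorizations into the Kronecker product and repeatedly use the mixed-product identity $(A_1B_1)\otimes(A_2B_2)=(A_1\otimes A_2)(B_1\otimes B_2)$ to separate the product into three tensor-product factors: one carrying the $\wM(H_s,\spec_s^{M_s})$'s (call this $T$), one carrying the $I_{n_s}\otimes\1_{1,k_s}$'s, and one carrying the diagonals $\Psi_s(Z_s)$, with permutation matrices $P(\sigma_s)$ on the left and $P(\tau_s)^{-1}$ on the right. Just as in the abelian proof, I would then rewrite
\begin{equation*}
(I_{n_l}\otimes\1_{1,k_l})\otimes\dotsb\otimes(I_{n_1}\otimes\1_{1,k_1})=(I_n\otimes\1_{1,k})P(\sigma)
\end{equation*}
for a suitable $\sigma\in\sym m$, and absorb the $P(\tau_s)^{-1}$'s into a single $P(\tau)^{-1}$ with $P(\tau)=P(\tau_l)\otimes\dotsb\otimes P(\tau_1)$. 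This produces the canonical form
\begin{equation*}
\wM(G,H,\ord,\spec)=T\cdot(I_n\otimes\1_{1,k})P(\sigma\tau^{-1})\cdot P(\tau)DP(\tau)^{-1},
\end{equation*}
where $D$ is the tensor product of the diagonal blocks.

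From here I would apply the same general identity used at the end of the proof of Theorem \ref{MainTheorem}, namely $\wrdet_k$ of a matrix of the shape $T(I_n\otimes\1_{1,k})P(\pi)\cdot(\text{diagonal})$ equals $\omega^{(k^n)}(\pi)\cdot(k!/k^k)^n\cdot(\det T)^k\cdot(\text{product of diagonal entries})$, together with the left $GL_n$-covariance of $\wrdet_k$ applied to the permutation factor $P(\sigma_l)\otimes\dotsb\otimes P(\sigma_1)$ on the left of $T$, which contributes $(\sgn\sigma)^k$ (here $\sigma$ now standing for the total sign of these left permutations, essentially $\prod_s(\sgn\sigma_s)$ raised to the appropriate power). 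Multiplicativity of determinant and Kronecker product then yields $(\det T)^k=\prod_s\wD(H_s,\spec_s^{M_s})^{m/n_s}$ and the diagonal part gives $\prod_s\Psi(Z_s)^{m/k_s}$.

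The main technical obstacle is purely bookkeeping: correctly identifying the permutations $\sigma,\tau\in\sym m$ that arise from collecting the per-coordinate $\sigma_s,\tau_s$ under the reordering forced by $(A\otimes C)(B\otimes D)$, and tracking how many copies of each $\sgn\sigma_s$ and $\det\wM(H_s,\cdot)$ appear after the $k$-th power induced by $GL_n$-covariance. Once the Kronecker-product tensor reshuffle has been carried out carefully, the rest follows by direct substitution into the formula of Theorem \ref{MainTheorem}'s final computation.
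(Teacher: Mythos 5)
Your proposal is correct and follows essentially the same route as the paper, which proves this theorem precisely ``by the same discussion in the finite abelian case'': take the Kronecker factorization $\wM(G,H,\ord,\spec)=\wM(G_l,H_l,\ord_l,\spec_l^{M_l})\otimes\dotsb\otimes\wM(G_1,H_1,\ord_1,\spec_1^{M_1})$, insert the separable factorizations coordinatewise, use the mixed-product rule to collect the three blocks $T$, $(I_n\otimes\1_{1,k})P(\sigma)$ and the diagonal part, and evaluate $\wrdet_k$ via the $\omega^{(k^n)}$ identity together with left $GL_n$-covariance (the paper absorbs the row-permutation signs into $(\det T)^k$ rather than pulling out $(\sgn\sigma)^k$ separately, but this is an equivalent bookkeeping choice).
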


\section{Further examples}

\subsection{Order specialization and Cayley specialization}

In the main part of the paper,
we exclusively discuss the wreath determinant
$\wD(G,H,\ord,\spec)$ with the principal specialization $\spec=\specp$
defined by $\specp(g_i)=q^{i}\in\C[q]$.
%As in the abelian case, we denote by $\specp$ the principal specialization given by
In what follows, we introduce two more kinds of specializations
and give several examples concerning such specializations.

\subsubsection{Order specialization}

We consider the specialization $\speco\colon G\to\C[q]$
defined by $\speco(g)=q^{o(g)-1}$,
where $o(g)$ is the order of $g$.

%\begin{ex}[$D_3$]
%For $G=D_3$, we have
%\begin{equation*}
%\speco(g_0)=1,\quad \speco(g_1)=q^2,\quad \speco(g_2)=q^2,\quad
%\speco(g_3)=q,\quad \speco(g_4)=q,\quad \speco(g_5)=q.
%\end{equation*}
%\end{ex}
\begin{ex}
For $G=\cy 6=\{0,1,2,3,4,5\}$, we have
\begin{equation*}
\speco(0)=1,\quad \speco(1)=q^5,\quad \speco(2)=q^2,\quad
\speco(3)=q,\quad \speco(4)=q^2,\quad \speco(5)=q^5.
\end{equation*}
\end{ex}

\subsubsection{Cayley specialization}

Let $S$ be a symmetric generating set of $G$.
Then $(G,S)$ defines an undirected graph so called \emph{Cayley graph}.
For $x,y\in G$,
denote by $d(x,y)$ the Cayley distance between $x$ and $y$
(i.e. the length of the shortest path connecting $x$ and $y$ in the Cayley graph $(G,S)$).
We call the specialization $\specC\colon G\to\C[q]$ defined by
$\specC(g)=q^{d(g,e)}$ the \emph{Cayley specialization} with respect to $S$.

\begin{ex}
If $G=\sym n$ and $S=\{\text{transpositions}\}$, then
\begin{equation*}
d(g,e)=\nu(g)=n-\text{(number of cycles in $g$)}.
\end{equation*}
\end{ex}

\subsection{Dihedral groups}

Let
$G=D_m=\gen{\sigma,\tau\,\middle|\,\sigma^m=\tau^2=e,\ \sigma\tau=\tau\sigma^{-1}}$
be the dihedral group of degree $m$.
We set
\begin{equation}
\ords(im+j)=g_{im+j}=\tau^i\sigma^{j-1}\qquad(i=0,1,\ j=0,1,2,\dots,m-1),
\end{equation}
which we call the standard ordering of $D_m$.
%As in the abelian case, we denote by $\specp$ the principal specialization given by
%$\specp(g_i)=q^{i}\in\C[q]$.

\begin{ex}
When $G=D_3$, we have
\begin{equation*}
g_0=e,\quad g_1=\sigma,\quad g_2=\sigma^2,\quad
g_3=\tau,\quad g_4=\tau\sigma,\quad g_5=\tau\sigma^2.
\end{equation*}
\end{ex}

We give several examples of the wreath determinants $\wD(D_m,H,\ords,\specp)$.

\begin{ex}
Since
\begin{equation*}
\wM(D_m,\gen\tau,\ords,\specp)
=\Bigl(
\begin{pmatrix}
1 & q^m \\
q^m & 1
\end{pmatrix}
\otimes\1_{1,m}
\Bigr)\cdot (I_2\otimes\diag(1,q,\dots,q^{m-1}))\cdot P(\tau)
\end{equation*}
for a certain $\tau\in\sym m^2$, we have
\begin{equation*}
\wD(D_m,\gen\tau,\ords,\specp)
=\omega^{(m,m)}(\tau)\Bigl(\frac{m!}{m^m}\Bigr)^2 q^{m(m-1)}\det\begin{pmatrix}
1 & q^m \\
q^m & 1
\end{pmatrix}^m
=\Bigl(\frac{m!}{m^m}\Bigr)^2q^{m(m-1)}(1-q^{2m})^m.
\end{equation*}
\end{ex}

\begin{ex}
Suppose that $m$ is even, and write $m=2k$.
Since
\begin{equation*}
\wM(D_m,\gen{\sigma^k},\ords,\specp)
=\Bigl(
\begin{pmatrix}
1 & q^k \\
q^k & 1
\end{pmatrix}
\otimes\1_{1,m}
\Bigr)\cdot (I_2\otimes\diag(1,q,\dots,q^{k-1},q^{2k},q^{2k+1},\dots,q^{3k-1}))\cdot P(\tau)
\end{equation*}
for a certain $\tau\in\sym m^2$, we have
\begin{equation*}
\wD(D_m,\gen{\sigma^k},\ords,\specp)
=\omega^{(m,m)}(\tau)\Bigl(\frac{m!}{m^m}\Bigr)^2 q^{k(k-1)+k(5k-1)}\det\begin{pmatrix}
1 & q^k \\
q^k & 1
\end{pmatrix}^m
=\Bigl(\frac{m!}{m^m}\Bigr)^2q^{2k(3k-1)}(1-q^m)^m.
\end{equation*}
\end{ex}

\begin{rem}
Though the example above seems to suggest that
$\wM(D_m,\gen{\sigma^k},\ords,\specp)$ is calculated explicitly
when $m=kn$ for some positive integer $n$,
it may not be so simple.
For instance, we have
\begin{equation*}
\wM(D_6,\gen{\sigma^2},\ords,\specp)=-\frac3{2^{18}}q^{42}(1-q^2)^4(1-q^6)^4\,A,
\end{equation*}
where $A=3+12q^2+6q^4-44q^6-84q^8-44q^{10}+6q^{12}+12q^{14}+3q^{16}$.
\end{rem}

\begin{ex}
We have
\begin{align*}
\wD(D_2,\gen\sigma,\ords,\specp)&=-\frac1{2^3}q^4(1-q^2)^2, \\
\wD(D_3,\gen\sigma,\ords,\specp)&=\frac1{2^5}q^9(1-q^2)^2(1-q^3)^2(1+2q-4q^3-2q^4), \\
\wD(D_4,\gen\sigma,\ords,\specp)&=-\frac1{2^6}q^{16}(1-q^2)^2(1-q^4)^4(1-3q^2+q^4), \\
\wD(D_5,\gen\sigma,\ords,\specp)&=\frac1{2^9}q^{25}(1-q^2)^2(1-q^5)^6(1+2q-4q^2-10q^3+3q^4+20q^5+8q^6-4q^7-2q^8), \\
\wD(D_6,\gen\sigma,\ords,\specp)&=-\frac1{2^{11}}q^{36}(1-q^2)^2(1-q^6)^8(4-22q^2+39q^4-22q^6+4q^8).
\end{align*}
\end{ex}

\begin{ex}[Order specializations]
We have
\begin{align*}
\wD(D_2,\gen\tau,\ords,\speco)&=
\frac{1}{2^3} q^2(1-q)^2, \\
\wD(D_3,\gen\tau,\ords,\speco)&=
\frac{2^2}{3^5} q^4(1-q^2)^3, \\
\wD(D_4,\gen\tau,\ords,\speco)&=
-\frac{3}{2^{12}} q^6 (1-q)^2 (1-q^2)^2 \left(1+8 q+8 q^3+q^4\right), \\
\wD(D_5,\gen\tau,\ords,\speco)&=
\frac{2^63^2}{5^9} q^8 (1-q^2)^2 (1-q^6)^3 (1-3q^2+q^4), \\
\wD(D_6,\gen\tau,\ords,\speco)&=
-\frac{5}{2^63^9} q^{10} (1-q)^6\,A,
\end{align*}
where
\begin{multline*}
A=6+40 q+120 q^2+252 q^3+425 q^4+612 q^5+774 q^6+884 q^7+923 q^8 \\
{}+884 q^9+774 q^{10}+612 q^{11}+425 q^{12}+252 q^{13}+120 q^{14}+40 q^{15}+6 q^{16}.
\end{multline*}
\end{ex}

\begin{ex}
Let $H=\gen\sigma$ and $Z=\{e,\tau\}$.
\begin{equation*}
\ordh(2i+j)=\tau^j\sigma^i\qquad(i=0,1,\ j=0,1,\dots,n-1).
\end{equation*}
Then
\begin{equation*}
\ordh(0)=e,\quad \ordh(1)=\tau,\quad \ordh(2)=\sigma,\quad
\ordh(3)=\tau\sigma,\quad \ordh(4)=\sigma^2,\quad \ordh(5)=\tau\sigma^2,\quad\dots
\end{equation*}
In this case, we have
\begin{align*}
\wD(D_2,\gen\sigma,\ordh,\specp)&=\frac1{2^2}q^2(1-q^4)^2,\\
\wD(D_3,\gen\sigma,\ordh,\specp)&=\frac1{2^5}q^3(1-q^2)^2(1-q^6)^2(4 + 8 q^2 + 6 q^4 + 2 q^6 + q^8),\\
\wD(D_4,\gen\sigma,\ordh,\specp)&=\frac1{2^6}q^4(1-q^4)^2(1-q^8)^4(4+q^8),\\
\wD(D_5,\gen\sigma,\ordh,\specp)&=\frac1{2^9}q^5(1-q^2)^2(1-q^{10})^6
(16 + 32 q^2 + 8 q^4 - 16 q^6 + 14 q^{10} + 8 q^{12} + 2 q^{14} + q^{16}),\\
\wD(D_6,\gen\sigma,\ordh,\specp)&=\frac1{2^{10}}q^6(1-q^4)^2(1-q^{12})^8(16 - 16 q^4 + 12 q^8 - q^{12} + q^{16}).
\end{align*}
\end{ex}

\subsection{Symmetric groups}

Let $G=\sym n$ be the symmetric group of degree $n$.
We order the elements in $G$ lexicographically.

\begin{ex}
When $G=\sym3$, we have
\begin{equation*}
g_0=123,\quad g_1=132,\quad g_2=213,\quad g_3=231,\quad g_4=312,\quad g_5=321
\end{equation*}
in one-line notation.
\end{ex}

\begin{ex}[Group determinants for $\sym n$]
We have
\begin{align*}
\wD(\sym 2, \specC)&=1-q^2, \\
\wD(\sym 3, \specC)&=(1-q^2)^5 (1-4q^2), \\
\wD(\sym 4, \specC)&=(1-q^2)^{23} (1-4q^2)^{10} (1-9q^2), \\
\wD(\sym 5, \specC)&=(1-q^2)^{119} (1-4q^2)^{78} (1-9q^2)^{17} (1-16q^2).
\end{align*}
\end{ex}

\begin{ex}[Group determinants for $A_n$]
We have
\begin{align*}
\wD(A_3, \specC)&=(1-q^2)^2 (1+2q^2), \\
\wD(A_4, \specC)&=(1-q^2)^{11} (1+11q^2), \\
\wD(A_5, \specC)&=(1-q^2)^{59} (1-4q^2)^{18} (1+6q^2)^{16} (1+35q^2+24q^4).
\end{align*}
\end{ex}

\subsection{Cayley graph for group-subgroup pair}

\begin{ex}
Let $G=\cy{12}$, $H=\{0,3,6,9\}<G$ and $S=\{2,4,5,7,8\}\subset G$.
From the triplet $(G,H,S)$, we construct an undirected graph whose vertices are elements of $G$
in such a way that
$h$ and $h+s$ are connected by an edge if and only if
(i) $h\in H$ and $s\in S\setminus H$
or
(ii) $h\in H$ and $s\in S\cap H$ \cite{Cid}. See Figure \ref{fig:Cid}.
Then we have
\begin{equation*}
\wrdet_3\bigl(q^{d(h,g)}\bigr)_{h\in H, g\in G}=
\frac{2^7}{3^{11}}q^8(1-q^2)^6(1-q^4)^3,
\end{equation*}
where $d(g,h)$ denotes the distance between two vertices $g$ and $h$ on the graph constructed above.
Notice that the distance $d(g,h)$ depends only on the difference $h-g$ for any $h\in H$ and $g\in G$.
On the other hand, we have
\begin{equation*}
\wD(G,H,\specC)=
-\frac{2^4}{3^{11}}q^8(1-q)^8(1-q^2)(5+12q+25q^2+52q^3+43q^4+12q^5-q^6)
\end{equation*}
for the Cayley specialization $\specC$ for the Cayley graph $(G,S\cup(-S))$.
\begin{figure}[htbp]
\begin{center}
\includegraphics[scale=1]{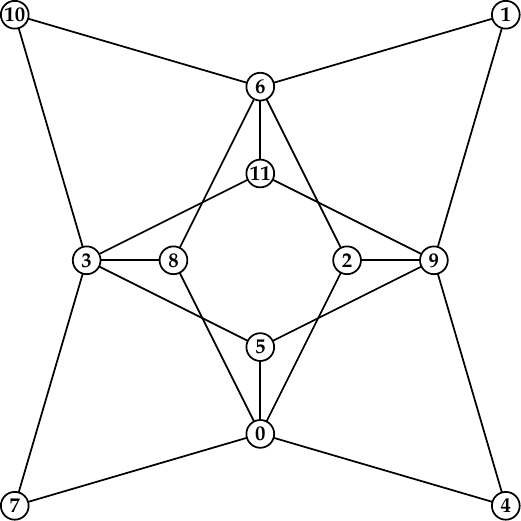}
\qquad
\qquad
\includegraphics[scale=1]{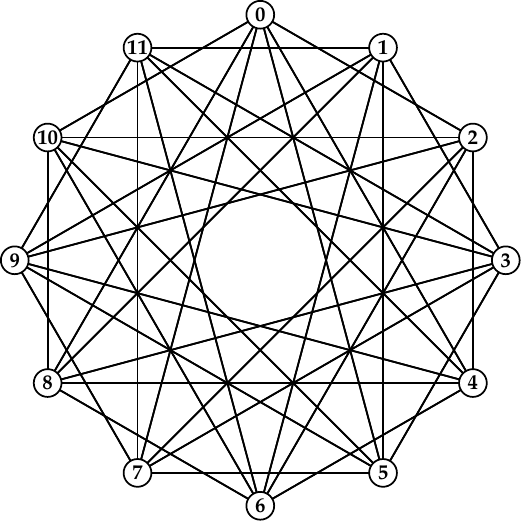}
\end{center}
\caption{Graphs for $(G,H,S)$ and $(G,S\cup(-S))$}\label{fig:Cid}
\end{figure}
\end{ex}

\begin{rem}
If one considers another specialization defined by $x_s=1$ for $s\in S$ and $x_g=0$ otherwise for a group-subgroup
pair, one may have a matrix corresponds to the rows associated with the elements of $H$
in the adjacency matrix of the group-subgroup pair graph, which is a generalization of the Cayley graphs.
See \S2 in \cite{Cid}.
\end{rem}

\begin{flushleft}
\bigskip

Kei Hamamoto \par
Graduate School of Mathematics, \par
Kyushu University \par
774 Motooka, Fukuoka 812-1234 JAPAN \par
\texttt{ma212034@math.kyushu-u.ac.jp}

\bigskip

Kazufumi Kimoto \par
Department of Mathematical Sciences, \par
University of the Ryukyus \par
1 Senbaru, Nishihara, Okinawa 903-0213 JAPAN \par
\texttt{kimoto@math.u-ryukyu.ac.jp}

\bigskip

Kazutoshi Tachibana \par
Shuyukan High School \par
6-1-10 Nishijin, Sawara-Ku, Fukuoka 814-8510 JAPAN \par
\texttt{tachibana-k3@fku.ed.jp}

\bigskip

Masato Wakayama \par
Institute of Mathematics for Industry, \par
Kyushu University \par
774 Motooka, Fukuoka 812-1234 JAPAN \par
\texttt{wakayama@imi.kyushu-u.ac.jp}
\end{flushleft}

\end{document}